\newtheorem{thm}{Theorem}
\newtheorem{cor}[thm]{Corollary}
\newtheorem{lem}[thm]{Lemma}
\newtheorem{conj}[thm]{Conjecture}
\theoremstyle{remark}
\newtheorem{rem}[thm]{Remark}
\theoremstyle{definition}
\newcommand\nc{\newcommand}
\nc\on{\operatorname}
\newcommand\fp{{\mathfrak p}}
\newcommand\fq{{\mathfrak q}}
\newcommand\fm{{\mathfrak m}}
\newcommand\mbf{\mathbf}
\nc\Hom{\on{Hom}}
\nc\Sections{\on{Sections}}
\nc\Sym{\on{Sym}}
\nc\Spec{\on{Spec}}
\nc\Specm{\on{Specm}}
\nc\ul{\underline}
\nc{\dfp}{\overset{\cdot}{\fp}}
\nc{\dfq}{\overset{\cdot}{\fq}}
\nc{\dfm}{\overset{\cdot}{\fm}}
\begin{document}

\title{Manifolds Containing an Ample $\mathbb{P}^1$-Bundle}
\author{Daniel Litt}

\begin{abstract}
Sommese has conjectured a classification of smooth projective varieties $X$  containing, as an ample divisor, a $\mathbb{P}^d$-bundle $Y$ over a smooth variety $Z$.  This conjecture is known if $d>1$, if $\dim(X)\leq 4$, or if $Z$ admits a finite morphism to an Abelian variety.  We confirm the conjecture if the Picard rank $\rho(Z)=1$, or if $Z$ is not uniruled.  In general we reduce the conjecture to a conjectural characterization of projective space: namely that if $W$ is a smooth projective variety, $\mathscr{E}$ is an ample vector bundle on $W$, and $\text{Hom}(\mathscr{E}, T_W)\not=0$, then $W\simeq \mathbb{P}^n$.
\end{abstract}
\maketitle

\tableofcontents
\section{Introduction}
Beltrametti and Sommese give a conjectural classification of smooth projective varieties $X$ containing a $\mathbb{P}^d$-bundle as an ample divisor \cite[Conjecture 5.5.1]{adjunction-theory}.  The main goal of this paper is to prove this conjecture in the case where $X$ has minimal Picard rank, $$\rho(X)=2.$$ 
 
Throughout the paper we work over $\mathbb{C}$; the phrase ``$\mathbb{P}^d$-bundle," will be taken to mean a $\mathbb{P}^d$-bundle locally trivial in the analytic topology.

The conjecture is:
\begin{conj}\label{sommeseconj}
Let $X$ be a smooth projective variety and $Y\subset X$ a smooth ample divisor.  Suppose that $p: Y\to Z$ is a morphism exhibiting $Y$ as a $\mathbb{P}^d$-bundle over a $b$-dimensional manifold $Z$.  Then one of the following holds:
\begin{enumerate}
\item $X\simeq\mathbb{P}^3$, $Y\simeq \mathbb{P}^1\times \mathbb{P}^1$ is a smooth quadric, and $p$ is one of the projections to $\mathbb{P}^1$.
\item $X\simeq {Q}^3\subset \mathbb{P}^4$ is a smooth quadric threefold, $Y\simeq \mathbb{P}^1\times \mathbb{P}^1$ is a hyperplane section, and $p$ is a projection to one of the factors.
\item $Y\simeq \mathbb{P}^1\times \mathbb{P}^b$, $Z\simeq \mathbb{P}^b$, $p: Y\to Z$ is the projection to the second factor, and $X$ is the projectivization of an ample vector bundle $\mathscr{E}$ on $\mathbb{P}^1$.
\item $X\simeq \mathbb{P}(\mathscr{E})$ for an ample vector bundle $\mathscr{E}$ on $Z$, and $\mathscr{O}_X(Y)\simeq \mathscr{O}_{\mathbb{P}(\mathscr{E})}(1)$ (i.e. $Y$ is a fiberwise hyperplane).
\end{enumerate}
\end{conj}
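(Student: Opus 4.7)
The plan is to approach the conjecture by Mori theory applied to a $\mathbb{P}^d$-fiber of $p\colon Y\to Z$. Set $L=\mathcal{O}_X(Y)$ and let $F\simeq \mathbb{P}^d$ denote such a fiber. Ampleness of $L|_Y$ forces $L|_F\simeq \mathcal{O}_{\mathbb{P}^d}(k)$ for some $k\geq 1$, and the normal bundle $N_{F/Y}$ is trivial because $F$ is a fiber of a smooth morphism. Hence $K_Y|_F=K_F=\mathcal{O}(-d-1)$, and adjunction $K_Y=(K_X+Y)|_Y$ gives $K_X\cdot \ell=-(d+1+k)$ for a line $\ell\subset F$. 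Thus $\ell$ is $K_X$-negative and its class lies on an extremal ray $R\subset \overline{\mathrm{NE}}(X)$, contracted by some $\varphi\colon X\to X'$ by the cone theorem.

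The ``large length'' regime would be handled first. When $l(R)=d+1+k$ is close to $\dim X+1$, the Ionescu--Wi\'sniewski inequality forces fibers of $\varphi$ to be large, and for $k$ sufficiently big $\varphi$ must contract $X$ to a point. Then $X$ is a Fano manifold of large index and the Kobayashi--Ochiai theorem identifies it with $\mathbb{P}^n$ or a smooth hyperquadric; low-dimensional case-checking isolates cases (1) and (2). Combined with the known case $d>1$, this reduces matters to $d=1$ and small $k$, most importantly $k=1$, where Grothendieck splitting on $F\simeq\mathbb{P}^1$ gives $N_{F/X}\simeq \mathcal{O}(1)\oplus \mathcal{O}^{\oplus b}$.

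In the main case $d=k=1$, the strategy is to show $\varphi$ is a smooth $\mathbb{P}^{d+1}$-bundle $X\to Z$ whose restriction to $Y$ recovers $p$ and whose tautological divisor is $Y$, yielding case (4). Deformations of $F$ inside $X$ fill out a $(b+1)$-parameter family of rational curves whose sweep is a $\mathbb{P}^{d+1}$ generically, so $\varphi$ is generically a projective bundle. Under the hypothesis $\rho(Z)=1$, the Picard rank of $X$ is at most two, and the alternative contraction (if any) is pinned down by its compatibility with $\varphi$. Under the hypothesis that $Z$ is not uniruled, any other Mori fibration of $X$ must factor through $Z$, incompatibly with the chosen ray, so again $\varphi$ is forced. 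Case (3) is extracted when a second extremal contraction $X\to \mathbb{P}^1$ coexists with $\varphi$, forcing $Z\simeq \mathbb{P}^b$ and a product-like structure.

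The principal obstacle, and the reason only a conditional reduction is available in general, is ruling out degenerate (``jumping'') fibers of $\varphi$. A hypothetical bad fiber $W=\varphi^{-1}(x_0)$ would be a Fano manifold carrying, from the normal geometry of nearby generic $\mathbb{P}^{d+1}$-fibers, an ample vector bundle $\mathscr{E}$ together with a nonzero morphism $\mathscr{E}\to T_W$ coming from how the family degenerates to $W$. The stated conjectural characterization of projective space, that such $W$ must be $\mathbb{P}^{d+1}$, is precisely the missing input: granted it, the projective bundle structure of $\varphi$ extends over all of $X$ and the classification closes.
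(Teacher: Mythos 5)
Your Mori-theoretic program is a genuinely different route from the paper's, which never contracts anything: the paper studies the obstructions to extending $p$ over the infinitesimal neighborhoods $Y_n$ of $Y$ in $X$, identifies them via the projection formula and relative Serre duality with elements of $\Hom(p_*(\omega_{Y/Z}\otimes \mathscr{O}(nY)|_Y), T_Z)$, and invokes Mourougane's theorem that $p_*(\omega_{Y/Z}\otimes \mathscr{O}(nY)|_Y)$ is zero or ample. So either $p$ extends to the formal completion $\widehat{Y}$ and hence (by the extension results of \cite{litt}) to all of $X$, which gives case (4) by Beltrametti--Ionescu, or $T_Z$ contains an ample subsheaf --- which is impossible if $Z$ is not uniruled (Miyaoka), forces $Z\simeq\mathbb{P}^n$ if $\rho(Z)=1$ (Kebekus), and in general is exactly where Conjecture \ref{awconj} is applied, \emph{to the base $Z$}, not to a fiber of a contraction.

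As written, your proposal has gaps at precisely the points where the difficulty sits. (i) The claim that the class of a line $\ell$ in a fiber of $p$ lies on an extremal ray does not follow from $K_X\cdot\ell<0$: the cone theorem only expresses such a class as a nonnegative combination of extremal classes and a $K_X$-nonnegative class, and in case (3) the fibers of $p$ are sections of $X=\mathbb{P}(\mathscr{E})\to\mathbb{P}^1$ whose class has no reason to be extremal, so the contraction $\varphi$ on which everything rests need not exist. (ii) The reduction to $k=1$ is not achieved: the Ionescu--Wi\'sniewski inequality only bites when $d+1+k$ is comparable to $\dim X=b+2$, leaving the range $2\le k\le b$ untouched. (iii) Most seriously, the construction of an ample vector bundle on a degenerate fiber $W$ of $\varphi$ together with a nonzero map to $T_W$ is asserted (``coming from how the family degenerates'') but never carried out; moreover fibers of extremal contractions need not be smooth, so $W$ need not be a Fano manifold to which Conjecture \ref{awconj} could even be applied. (iv) The unconditional cases are not actually proved: the paper uses non-uniruledness and $\rho(Z)=1$ through Miyaoka's and Kebekus's theorems applied to an ample subsheaf of $T_Z$, whereas your remarks about compatibility of a second contraction with $\varphi$ do not address the jumping-fiber problem, which persists however many extremal rays $X$ has.
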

Sommese has proven the case where $d\geq 2$ (see e.g. \cite[Theorem 5.5.2]{adjunction-theory}).  The cases where $d=1$ and $b=1, 2$ are due to the work of several authors (see e.g. \cite[Theorem 7.4]{beltrametti-ionescu} and the references therein).  We prove the conjecture in the case where $$\rho(Z)=1,$$ (if $\dim(X)\geq 4$, this is equivalent to $\rho(X)=2$) and in general reduce it to a plausible conjectural improvement of a result of Andreatta-Wisniewski \cite{andreatta-wisniewski}, namely
\begin{conj}\label{awconj}
Let $X$ be a smooth projective variety and $\mathscr{E}$ an ample vector bundle on $X$.  If $$\on{Hom}(\mathscr{E}, T_X)\neq 0,$$ then $X\simeq \mathbb{P}^n$.
\end{conj}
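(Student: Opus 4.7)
The plan is to reduce Conjecture \ref{awconj} to the Andreatta-Wisniewski theorem by a factorization-and-saturation argument. Let $\phi \in \Hom(\mathscr{E}, T_X)$ be a nonzero morphism, and factor $\phi$ as
\[
\mathscr{E} \twoheadrightarrow \mathscr{F} \hookrightarrow T_X,
\]
where $\mathscr{F} := \on{image}(\phi)$ is a torsion-free subsheaf of $T_X$. Let $\widetilde{\mathscr{F}} \subseteq T_X$ denote its saturation, a reflexive coherent subsheaf, locally free in codimension one. If $\mathscr{F}$ is already a subbundle of $T_X$, then $\mathscr{F}$ is a locally free quotient of the ample bundle $\mathscr{E}$, hence itself ample, and the Andreatta-Wisniewski theorem \cite{andreatta-wisniewski} applies directly to conclude $X \simeq \mathbb{P}^n$. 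So the substance of the conjecture lies in the case where $\phi$ degenerates along some proper closed subset $D \subset X$.

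To handle the general case, I would first establish enough positivity for $\widetilde{\mathscr{F}}$---for instance, that $\det \widetilde{\mathscr{F}}$, interpreted on the locally free locus and extended across codimension two, is big, and that its restriction to a general complete-intersection curve is an ample vector bundle. This should be accessible because $\widetilde{\mathscr{F}}$ agrees generically with the quotient $\mathscr{F}$ of the ample $\mathscr{E}$, so curves avoiding $D$ inherit ampleness from $\mathscr{E}$. Next, I would run a bend-and-break argument to produce, through every point of $X$, a minimal rational curve whose tangent direction lies in $\widetilde{\mathscr{F}}$, analyze the resulting unsplit family via Kebekus-type splitting results for tangent bundles along minimal rational curves, and conclude by a Cho-Miyaoka-Shepherd-Barron-style characterization of projective space.

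The main obstacle is carrying out the bend-and-break step at points of the degeneracy locus $D$, where the rank of $\mathscr{F}$ drops and $\widetilde{\mathscr{F}}$ may fail to be locally free. Tangency to a reflexive (rather than locally free) subsheaf is delicate to propagate in families, and the natural minimal family constructed from $\widetilde{\mathscr{F}}$ could either fail to dominate $X$ or acquire contracted or reducible components over $D$. An alternative is to work on $Y := \mathbb{P}(\mathscr{E}^\vee)$, where the tautological inclusion $\mathscr{O}_Y(-1) \hookrightarrow \pi^\ast \mathscr{E}$ composed with $\pi^\ast \phi$ yields a nonzero map $\mathscr{O}_Y(-1) \to \pi^\ast T_X$, and attempt bend-and-break upstairs using the ampleness of $\mathscr{O}_Y(1)$; but lifting tangent directions across the $\pi$-fibers and controlling the image of resulting curves in $X$ appears equally delicate, consistent with the fact that Conjecture \ref{awconj} remains open.
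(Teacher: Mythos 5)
The statement you were asked to prove is labeled a \emph{conjecture} in the paper, and the paper gives no proof of it: Conjecture \ref{awconj} is used only as a hypothesis (the paper reduces Conjecture \ref{sommeseconj} to it), and the surrounding Remark explicitly records that only special cases are known --- constant-rank maps $\mathscr{E}\to T_X$ by Andreatta--Wisniewski, the case $\rho(X)=1$ by Kebekus, and maps generically of maximal rank by Kebekus's methods. So there is no ``paper's own proof'' to compare against, and no complete proof should be expected.

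Your proposal correctly isolates the known case: when $\on{image}(\phi)$ is a subbundle of $T_X$ it is an ample locally free quotient of $\mathscr{E}$, and \cite{andreatta-wisniewski} applies; this matches exactly what the paper's Remark says is known. Everything beyond that is a program, not a proof, and you candidly identify where it breaks: the positivity you want for the saturation $\widetilde{\mathscr{F}}$ is only controlled away from the degeneracy locus $D$, and the bend-and-break/VMRT machinery does not straightforwardly propagate tangency to a merely reflexive subsheaf through points of $D$, nor does it rule out the minimal family degenerating there. That is precisely the open content of the conjecture. To be clear about one overstatement: ampleness of $\mathscr{F}$ restricted to general complete-intersection curves avoiding $D$ gives you positivity in the sense of Miyaoka/foliation-type criteria (e.g.\ generic ampleness, which already forces $X$ to be uniruled by \cite[IV.1.16]{kollar}-type results), but it does not by itself yield an ample subsheaf of $T_X$ in the sense needed to invoke \cite[Corollary 4.3]{kebekus} unless $\rho(X)=1$ or the saturation is genuinely ample as a sheaf. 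Your sketch is a reasonable outline of how one might attack the conjecture, but it is not, and does not claim to be, a proof.
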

We also prove the conjecture in the case that $Z$ is not uniruled.
\begin{rem}
By \cite{andreatta-wisniewski}, the existence of a map $\mathscr{E}\to T_X$ of constant rank implies $X\simeq \mathbb{P}^n$; likewise, \cite[Corollary 4.3]{kebekus} proves the conjecture if $\rho(X)=1$.  One may also use the methods of \cite[Section 4]{kebekus} to prove the conjecture if there exists a map $\mathscr{E}\to T_X$ generically of maximal rank.
\end{rem}
The idea of our argument is to show (via an analysis of the deformation theory of the map $p: Y\to Z$) that either $p$ extends to a map $\tilde{p}: X\to Z$ (using results of \cite{litt}), or there is an ample vector bundle $\mathscr{E}$ on $Z$ and a map $\mathscr{E}\to T_Z$.  In the former case, we are done by work of Sommese; in the latter case, we're may apply Conjecture 2 to proceed.

{\bf Acknowledgments.}
This note owes a debt to conversations with Tommaso de Fernex, Paltin Ionescu, and Jason Starr.  It was written with support from an NSF Postdoctoral Fellowship.
\section{The Proof}
We first show:
\begin{lem}\label{mainlemma}
As before, let $X$ be a smooth projective variety, $Y\subset X$ a smooth ample divisor, and $p: Y\to Z$ a $\mathbb{P}^1$-bundle.  Let $\widehat{Y}$ be the formal scheme obtained by completing $X$ at $Y$.  If $p: Y\to Z$ does not extend to a morphism $\widehat{p}: \widehat{Y}\to Z$, then there exists an ample vector bundle $\mathscr{E}$ on $Z$ and a non-zero morphism $\mathscr{E}\to T_Z$.
\end{lem}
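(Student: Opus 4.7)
My plan is to extract $\mathscr{E}$ from the obstructions to extending $p$ order by order along $Y$ in $X$. Let $I = \mathscr{O}_X(-Y) \subset \mathscr{O}_X$ denote the ideal sheaf of $Y$, so that the graded pieces $I^n/I^{n+1} \cong N^{-n}$, where $N = N_{Y/X}$ is the ample normal bundle. Writing $Y_n \subset X$ for the $n$-th infinitesimal neighborhood of $Y$ (with structure sheaf $\mathscr{O}_X/I^{n+1}$), the formal scheme $\widehat{Y}$ is the colimit of the $Y_n$, and $\widehat{p}$ exists exactly when $p$ extends compatibly to every $Y_n$. If $\widehat{p}$ does not exist, there is a smallest integer $m \geq 1$ such that $p$ extends to a morphism $p_{m-1}: Y_{m-1} \to Z$ but not to any morphism $p_m : Y_m \to Z$. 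By the standard deformation theory of morphisms into a smooth target, the obstruction is a nonzero class in
\[
\mathrm{Ext}^1_{\mathscr{O}_Y}\!\bigl(p^*\Omega_Z,\, N^{-m}\bigr) \;=\; H^1\bigl(Y,\, p^*T_Z \otimes N^{-m}\bigr).
\]

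Next I transport this class to $Z$ via the Leray spectral sequence for $p$. Because $N$ is ample, its restriction to every fiber $F \cong \mathbb{P}^1$ of $p$ has strictly positive degree, hence $N^{-m}$ has no fiberwise sections and the projection formula gives $p_*(p^*T_Z \otimes N^{-m}) = T_Z \otimes p_* N^{-m} = 0$. The Leray spectral sequence therefore degenerates to
\[
H^1\bigl(Y,\, p^*T_Z \otimes N^{-m}\bigr) \;\cong\; H^0\bigl(Z,\, T_Z \otimes R^1 p_* N^{-m}\bigr) \;=\; \Hom_Z\bigl(\mathscr{E},\, T_Z\bigr),
\]
where $\mathscr{E} := (R^1 p_* N^{-m})^\vee$, and the nonzero obstruction produces the required nonzero morphism $\mathscr{E} \to T_Z$. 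Relative Serre duality along the $\mathbb{P}^1$-bundle $p$ identifies $\mathscr{E}$ with the more geometric object $p_*(N^m \otimes \omega_{Y/Z})$.

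The principal remaining step is to verify that $\mathscr{E}$ is ample on $Z$. Writing $Y = \mathbb{P}(V)$ for $V$ a rank-two bundle on $Z$, $N = \mathscr{O}_{\mathbb{P}(V)}(k) \otimes p^*L$ with $k \geq 1$, and $\omega_{Y/Z} = \mathscr{O}(-2) \otimes p^*(\det V)^{\pm 1}$, one computes
\[
\mathscr{E} \;\cong\; \Sym^{km-2}(V) \otimes L^m \otimes (\det V)^{\pm 1}
\]
(valid once $km \geq 2$). The ampleness of $N$ on $Y$ translates into positivity of the twisted symmetric power $\Sym^k V \otimes L$ on $Z$, and one propagates this to $\mathscr{E}$ via standard operations that preserve ampleness of vector bundles (quotients, tensor and symmetric powers, and determinants). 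I expect this to be the hardest step: the integer $m$ is dictated by where the obstruction first appears, so one cannot simply appeal to the (easier) fact that $\mathscr{E}$ is ample for $m \gg 0$. Instead the argument must exploit the special structure at the \emph{first} order of failure—in particular the vanishing of all earlier obstructions—to force ampleness at this specific $m$, likely through a case analysis on the fiber degree $k$ (which in the known examples is $1$ or $2$).
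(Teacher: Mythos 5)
Your reduction of the problem is exactly the paper's: identify the obstruction to extending $p$ across successive infinitesimal neighborhoods as a class in $H^1(Y, p^*T_Z\otimes N^{-m})$, kill $p_*$ of the negative-fiber-degree sheaf, and use Leray plus relative Serre duality to rewrite the obstruction group as $\operatorname{Hom}(p_*(N^m\otimes\omega_{Y/Z}), T_Z)$. Up to that point the argument is correct and complete. But the step you flag as ``the principal remaining step'' --- ampleness of $\mathscr{E}=p_*(N^m\otimes\omega_{Y/Z})$ --- is precisely where the paper's proof has its only real content beyond bookkeeping, and your proposal does not actually supply it. The paper invokes Mourougane's theorem on direct images of adjoint line bundles (\cite[Theorem 1.2]{mourougane}): for an ample line bundle $N$ on the total space of a projective bundle, $p_*(N\otimes\omega_{Y/Z})$ is zero or ample. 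This is a genuinely nontrivial positivity theorem, and your sketch of an elementary substitute does not close the gap.

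Two specific problems with the route you outline. First, writing $Y=\mathbb{P}(V)$ for a rank-two bundle $V$ on $Z$ is not automatic for a $\mathbb{P}^1$-bundle locally trivial in the analytic topology: it requires the Brauer obstruction to vanish, which you would need to argue (e.g.\ from the parity of the fiber degree $k$ of $N$). Second, and more seriously, ampleness of $N=\mathscr{O}_{\mathbb{P}(V)}(k)\otimes p^*L$ on $Y$ for $k\geq 2$ is \emph{not} equivalent to ampleness of $\operatorname{Sym}^k V\otimes L$ on $Z$ (one implication fails in general), so the ``positivity of the twisted symmetric power'' you want to propagate is not what the hypothesis hands you, and the standard closure properties of ample bundles under quotients and symmetric powers do not bridge this. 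Your final suggestion --- that one must exploit the vanishing of the earlier obstructions to force ampleness at the first obstructed order $m$ --- points in the wrong direction: by Mourougane's theorem the ampleness of $p_*(N^m\otimes\omega_{Y/Z})$ holds unconditionally for every $m$ for which the pushforward is nonzero, with no input from the deformation theory. So the fix is not a cleverer case analysis on $k$ but the citation (or a proof) of the direct-image positivity statement.
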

\begin{proof}
Let $\mathscr{I}_Y$ be the ideal sheaf of $Y$, and let $Y_n$ be the subscheme of $X$ cut out by $\mathscr{I}_Y^n$.  Then the obstruction to extending a map $$p_n: Y_n\to Z$$ to a map $$Y_{n+1}\to Z$$ lies in 
$$\on{Ext}^1(p^*\Omega^1_Z, \mathscr{I}_{Y}^n/\mathscr{I}_Y^{n+1})=H^1(Y, p^*T_Z\otimes \mathscr{I}_Y^n/\mathscr{I}_Y^{n+1}).$$
This last is equal to $$H^1(\mathbf{R}p_*(p^*T_Z\otimes \mathscr{I}^n_Y/\mathscr{I}_Y^{n+1}))$$ which is the same as $$H^1(T_Z\otimes \mathbf{R}p_*\mathscr{I}^n_Y/\mathscr{I}_Y^{n+1})$$ by the projection formula.  As $\mathscr{I}^n_Y/\mathscr{I}_Y^{n+1}$ is anti-ample, this last equals $$H^0(T_Z\otimes \mbf{R}^1p_*\mathscr{I}^n_Y/\mathscr{I}_Y^{n+1}).$$  Applying Serre duality, we see that this is the same as $$H^0(T_Z\otimes(p_*(\mathscr{O}(nY)|_Y\otimes \omega_{Y/Z}))^\vee)=\on{Hom}(p_*(\omega_{Y/Z}\otimes \mathscr{O}(nY)|_Y), T_Z).$$

But by \cite[Theorem 1.2]{mourougane}, $$p_*(\omega_{Y/Z}\otimes \mathscr{O}(nY)|_Y)$$ is either zero or ample.  Thus either the problem of extending $p$ to $\widehat{Y}$ is unobstructed, or the obstruction \emph{is} a non-zero map from an ample vector bundle $\mathscr{E}$ on $Z$ to $T_Z$, as desired.
\end{proof}
We will also require:
\begin{lem}\label{lefschetzlemma}
Let $X$ be a smooth projective variety of dimension at least $3$, and $Y\subset X$ an ample divisor.  Let $Z$ be a smooth variety with $\dim(Z)<\dim(Y)$.  Then the restriction map $$\on{Hom}(X, Z)\to \on{Hom}(\widehat{Y}, Z)$$ is a bijection.  Here $\widehat{Y}$ is, as before, the formal scheme obtained by completing $X$ at $Y$.
\end{lem}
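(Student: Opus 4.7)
The plan is to prove injectivity and surjectivity of the restriction map separately.

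For injectivity, given $f, g\colon X \to Z$ with $f|_{\widehat{Y}} = g|_{\widehat{Y}}$, I would form the equalizer $E = (f,g)^{-1}(\Delta_Z)$, a closed subscheme of $X$ since $Z$ is separated. By hypothesis $E \supseteq Y_n$ for every $n$, so the coherent ideal sheaf $\mathscr{I}_E$ lies in $\bigcap_n \mathscr{I}_Y^n$; Krull's intersection theorem applied stalkwise at each $y \in Y$ shows $(\mathscr{I}_E)_y = 0$, and coherence then forces $\mathscr{I}_E$ to vanish on a Zariski-open $U \supseteq Y$. The closed complement $X \setminus U$ is disjoint from the ample divisor $Y$, so it cannot contain any projective curve (curves meet ample divisors positively), hence is zero-dimensional; but a nonempty open subset of the irreducible, positive-dimensional variety $X$ cannot lie in a finite set, so $X \setminus E = \emptyset$ and $f = g$.

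For surjectivity, given $\widehat{p}\colon \widehat{Y}\to Z$, I would form its graph $\widehat{\Gamma}$ as a closed formal subscheme of $\widehat{Y}\times Z$, which is the formal completion of the proper variety $X\times Z$ along $Y\times Z$. The central step is to algebraize $\widehat{\Gamma}$ to a bona fide closed subscheme $\Gamma\subset X\times Z$ via a Grothendieck-Lefschetz-type existence theorem: the normal bundle of $Y\times Z$ in $X\times Z$ is the pullback of the ample line bundle $\mathscr{O}_X(Y)|_Y$, so the relevant effective Lefschetz condition should hold (cf.\ Hartshorne's \emph{Ample Subvarieties of Algebraic Varieties}, or the techniques of \cite{litt}), and the hypothesis $\dim Z < \dim Y$ enters to force the necessary cohomological obstructions to vanish.

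Once $\Gamma$ is algebraized, I would verify that the first projection $\pi_X\colon \Gamma\to X$ is an isomorphism. The formal completion of $\pi_X$ along $Y$ is the graph projection $\widehat{\Gamma}\xrightarrow{\sim}\widehat{Y}$, so the fibers of $\pi_X$ over $Y$ are reduced single points; by upper semi-continuity the locus of positive-dimensional fibers is a closed subset of $X$ disjoint from $Y$, hence zero-dimensional by ampleness, hence empty. Thus $\pi_X$ is proper and quasi-finite, so finite of generic degree one; since $X$ is smooth and hence normal, Zariski's main theorem gives that $\pi_X$ is an isomorphism, and $p := \pi_Z\circ\pi_X^{-1}$ restricts to $\widehat{p}$ on $\widehat{Y}$ by the uniqueness of algebraization. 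The principal obstacle is the algebraization step; once that is granted, the remainder of the argument rests on Zariski's main theorem together with the same principle used in the injectivity half, that any closed subset of $X$ avoiding the ample divisor $Y$ is zero-dimensional.
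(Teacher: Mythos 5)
Your injectivity argument is fine, and your overall plan for surjectivity (algebraize the formal graph, then contract via Zariski's main theorem) is reasonable in outline, but the step you yourself flag as the ``principal obstacle'' is a genuine gap, and the justification you sketch for it does not work. The divisor $Y\times Z$ is \emph{not} ample in $X\times Z$: its normal bundle is the pullback of $\mathscr{O}_X(Y)|_Y$ along the projection $Y\times Z\to Y$, which is trivial on every fiber $\{y\}\times Z$ and hence not ample once $\dim Z>0$. So the classical Grothendieck--Lefschetz existence theorems for ample divisors (Hartshorne, SGA~2) do not apply to the pair $(X\times Z,\,Y\times Z)$, and the effective Lefschetz condition you invoke is exactly what would need to be proved. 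The relative extension statement you need --- that a section of $X\times Z\to X$ over $\widehat{Y}$ spreads out to a Zariski-open neighborhood of $Y$ --- is precisely the content of \cite[Corollary 2.10]{litt}, the first of the two results the paper's proof cites; it is not a formal consequence of the ampleness of $Y$ in $X$.

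There is a second gap at the end of your argument: from ``the locus of positive-dimensional fibers of $\pi_X$ is closed and disjoint from $Y$'' you may conclude that it is finite, but not that it is empty --- a proper birational morphism can perfectly well contract a divisor to finitely many points. Ruling this out is where the hypothesis $\dim Z<\dim Y$ actually does its work (not, as you suggest, in ``cohomological obstructions'' during algebraization): by purity of the exceptional locus of a birational morphism onto the smooth variety $X$, any exceptional set would be a divisor in $\Gamma$, of dimension $\dim X-1=\dim Y$, whereas every fiber of $\pi_X$ lies in some $\{x\}\times Z$ and so has dimension at most $\dim Z<\dim Y$; hence the exceptional locus is empty. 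This elimination-of-indeterminacy step is the content of \cite[Corollary 3.3]{litt}, the second result the paper cites. Finally, note that the Lemma assumes only that $Z$ is a smooth variety, not that it is proper, so $\pi_X\colon\Gamma\to X$ need not be proper as written; you would have to compactify $Z$ (and control the boundary) or argue as the paper does.
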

\begin{proof}
This is a combination of two results from \cite{litt}.  First, by \cite[Corollary 2.10]{litt}, applied to the projection $X\times Z\to X$, a map $p: \widehat{Y}\to Z$ extends uniquely to some Zariski-open neighborhood $U$ of $Y$.  Second, by \cite[Corollary 3.3]{litt}, this rational map to $Y$ is in fact regular.
\end{proof}
\begin{cor}
Let $X, Y, Z, p$ be as in Conjecture \ref{sommeseconj}.  Suppose that either 
\begin{enumerate}
\item $Z$ is not uniruled, or 
\item $\rho(Z)=1$ (equivalently, $\rho(X)=\rho(Y)=2$).
\end{enumerate}
Then Conjecture \ref{sommeseconj} is true for $X, Y, Z, p$.
\end{cor}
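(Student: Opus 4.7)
The plan is to combine the two preceding lemmas with the known partial cases of Conjecture \ref{awconj}. Since \cite[Theorem 7.4]{beltrametti-ionescu} handles $b \leq 2$, I would reduce at the outset to $b \geq 3$, so that $\dim X \geq 5$ and the Lefschetz hyperplane theorem gives $\rho(X) = \rho(Y) = \rho(Z) + 1$.

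I would then apply Lemma \ref{mainlemma} to split into two cases. Either the deformation-theoretic obstruction vanishes, so $p$ extends to a formal morphism $\widehat p : \widehat Y \to Z$, or there is a nonzero morphism $\mathscr{E} \to T_Z$ with $\mathscr{E}$ an ample vector bundle on $Z$. In the first (extension) case, Lemma \ref{lefschetzlemma} applies since $\dim Z = b < b+1 = \dim Y$ and $\dim X \geq 3$; this produces a morphism $\tilde p : X \to Z$ extending $p$, and one then invokes Sommese's structure theorem for varieties containing a projective bundle as an ample divisor \cite[Theorem 5.5.2]{adjunction-theory} to conclude that we are in case (4) of Conjecture \ref{sommeseconj}.

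In the obstruction case, I would split according to the two hypotheses of the corollary. If $Z$ is not uniruled, the image of $\mathscr{E} \to T_Z$ is a torsion-free subsheaf of $T_Z$ that is a quotient of an ample bundle, hence has strictly positive slope with respect to any polarization; this contradicts the characterization of non-uniruled varieties (Miyaoka; Boucksom--Demailly--Paun--Peternell) as those whose tangent bundle admits no positive subsheaf. Hence the obstruction must in fact vanish, and we return to case (4). If instead $\rho(Z) = 1$, then Kebekus's \cite[Corollary 4.3]{kebekus}, which is the $\rho = 1$ instance of Conjecture \ref{awconj}, forces $Z \simeq \mathbb{P}^b$, and it remains to match this geometry to case (3) of Conjecture \ref{sommeseconj}.

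The hard step is this last matching: given $Z \simeq \mathbb{P}^b$, so that $Y = \mathbb{P}_{\mathbb{P}^b}(\mathscr{F})$ for some rank-two bundle $\mathscr{F}$, and given that $Y \subset X$ is ample with $p$ not extending, one must show $\mathscr{F}$ splits as a sum of line bundles, $Y \simeq \mathbb{P}^1 \times \mathbb{P}^b$, and $X \simeq \mathbb{P}_{\mathbb{P}^1}(\mathscr{E})$ for an ample bundle $\mathscr{E}$ on $\mathbb{P}^1$. I would approach this using $\rho(X) = 2$ to analyze the two extremal rays of $\overline{\mathrm{NE}}(X)$: the non-extension of $p$ precludes either contraction from being a morphism to $Z$, so one of them should instead be a Mori contraction $X \to C$ onto a smooth curve of Picard rank one, which adjunction along $Y$ identifies with $\mathbb{P}^1$, endowing $X$ with the desired scroll structure. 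The product structure of $Y$ over this $\mathbb{P}^1$ and over $\mathbb{P}^b$, combined with pulling back ample classes from both factors, then forces the splitting of $\mathscr{F}$.
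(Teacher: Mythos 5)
Your argument tracks the paper's proof almost step for step through the main dichotomy: reduce to $d=1$ and $\dim Z\geq 3$, apply Lemma \ref{mainlemma}, handle the extension case via Lemma \ref{lefschetzlemma} plus known structure theory (the paper cites \cite[Theorem 5.5(ii)]{beltrametti-ionescu} here rather than the $d\geq 2$ theorem \cite[Theorem 5.5.2]{adjunction-theory}, which is really the wrong tool for this step since it concerns higher fiber dimension), kill the obstruction case for non-uniruled $Z$ by Miyaoka, and use Kebekus's \cite[Corollary 4.3]{kebekus} to get $Z\simeq\mathbb{P}^b$ when $\rho(Z)=1$. One small omission at the outset: you never explicitly dispose of the case $d\geq 2$ (already known by Sommese), yet you silently assume $d=1$ when you write $\dim Y=b+1$.

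The genuine gap is the step you yourself flag as hard: passing from ``$Z\simeq\mathbb{P}^b$ and $p$ does not extend'' to case (3) of Conjecture \ref{sommeseconj}. The paper dispatches this with a single citation to \cite[Theorem 2.1]{fania1987}, which classifies ample divisors that are $\mathbb{P}^1$-bundles over projective space. Your proposed replacement is a Mori-theoretic sketch whose key assertions are not justified: you do not show that the second extremal contraction of $X$ is of fiber type onto a curve (a priori it could be birational, or a contraction whose general fiber has larger dimension), you do not explain why that curve is $\mathbb{P}^1$ beyond the phrase ``adjunction along $Y$ identifies with $\mathbb{P}^1$,'' and the splitting of the rank-two bundle $\mathscr{F}$ on $\mathbb{P}^b$ is asserted rather than proved --- this last point is exactly where the real content of Fania's theorem lies, since an arbitrary rank-two bundle on $\mathbb{P}^b$ need not split and one must use the ampleness of $Y$ in $X$ to force $Y\simeq\mathbb{P}^1\times\mathbb{P}^b$. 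As written, the proposal therefore does not close the $\rho(Z)=1$ case; either supply the extremal-ray argument in full or cite the known classification.
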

\begin{proof}
Without loss of generality, $p$ has relative dimension $1$ (i.e. it exhibits $Y$ as a $\mathbb{P}^1$-bundle over $Z$) as the case of relative dimension greater than $1$ is already known \cite[Theorem 5.5.2]{adjunction-theory}.  We may also assume $\dim(Z)>2$, as again, if $\dim(Z)\leq 2$, the result is already known \cite[Theorem 7.4]{beltrametti-ionescu}.

By Lemma \ref{mainlemma}, either $p$ extends to a map $\hat p: \widehat{Y}\to Z$ or $T_Z$ contains an ample subsheaf, namely the image of $\mathscr{E}$ from Lemma \ref{mainlemma}.   In the former case, the map $p$ extends to a map $\tilde p: X\to Z$ by Lemma \ref{lefschetzlemma} and we are done by \cite[Theorem 5.5(ii)]{beltrametti-ionescu} (in particular, we are in case (4) of the conjecture).  In the latter case, we consider the situations 
\begin{enumerate}
\item $Z$ not uniruled, or
\item $\rho(Z)=1$  
\end{enumerate}
separately.

\begin{enumerate}
\item Suppose $Z$ is not uniruled.  Then $T_Z$ contains no ample subsheaves by a result of Miyaoka (see e.g. \cite[IV.1.16]{kollar}), so we have a contradiction. 
\item Alternately, suppose $\rho(Z)=1$.  Then as $T_Z$ contains an ample subsheaf, by \cite[Corollary 4.3]{kebekus}, $Z\simeq \mathbb{P}^n$.  By \cite[Theorem 2.1]{fania1987} we conclude the result, namely that we are in case (3) of the conjecture.
\end{enumerate}
\end{proof}
\begin{cor}
Suppose that Conjecture \ref{awconj} is true.  Then Conjecture \ref{sommeseconj} holds as well.
\end{cor}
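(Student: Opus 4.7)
My plan is to run the argument of the previous corollary essentially verbatim, with Conjecture \ref{awconj} standing in for the case-specific inputs (Miyaoka for non-uniruled $Z$, Kebekus for $\rho(Z)=1$). As before, one first reduces to the situation where $p$ has relative dimension one and $\dim(Z) > 2$, since the case $d \geq 2$ is \cite[Theorem 5.5.2]{adjunction-theory} and the case $\dim(Z) \leq 2$ is \cite[Theorem 7.4]{beltrametti-ionescu}.

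Next I would apply Lemma \ref{mainlemma} to obtain the dichotomy: either $p$ extends to a formal morphism $\hat p : \widehat Y \to Z$, or there is an ample vector bundle $\mathscr{E}$ on $Z$ together with a non-zero morphism $\mathscr{E} \to T_Z$. In the first situation, Lemma \ref{lefschetzlemma} algebraizes $\hat p$ to an honest morphism $\tilde p : X \to Z$, and \cite[Theorem 5.5(ii)]{beltrametti-ionescu} then places us in case (4) of Conjecture \ref{sommeseconj}.

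In the second situation, the existence of a non-zero element of $\on{Hom}(\mathscr{E}, T_Z)$ with $\mathscr{E}$ ample lets me invoke Conjecture \ref{awconj} (applied with $W = Z$) to conclude $Z \simeq \mathbb{P}^n$. Once this is in hand, \cite[Theorem 2.1]{fania1987} delivers case (3) of Conjecture \ref{sommeseconj}, completing the argument.

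The notable feature of this approach is that there is essentially no new obstacle to overcome: Conjecture \ref{awconj} is precisely a uniform strengthening that subsumes both special inputs used in the previous corollary (the absence of ample subsheaves of $T_Z$ when $Z$ is not uniruled, and Kebekus's characterization of projective space when $\rho(Z) = 1$). Plugging it into the dichotomy of Lemma \ref{mainlemma} turns the earlier case-by-case argument into a single clean implication, so the only ``work'' is the bookkeeping described above.
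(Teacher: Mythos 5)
Your proposal is correct and matches the paper's argument exactly: the paper's proof of this corollary is literally ``the same argument as in the $\rho(Z)=1$ case above, replacing the reference to \cite{kebekus} with Conjecture \ref{awconj},'' which is precisely the reduction, dichotomy, and application of \cite{fania1987} that you spell out. No issues.
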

\begin{proof}
This is the same argument as in the $\rho(Z)=1$ case above, replacing the reference to \cite{kebekus} with Conjecture \ref{awconj}.
\end{proof}
\bibliographystyle{alpha}
\bibliography{sommese-bibtex}

\end{document}